\renewcommand{\r}{\mathbb{R}}
\newtheorem{thm}[equation]{Theorem}
\newtheorem{cor}[equation]{Corollary}
\theoremstyle{definition}
\newtheorem{rem}[equation]{Remark}
\title{Zero sets and factorization of polynomials of two variables}
\author{ \ Micki Balaich 
and\ Mihail Cocos
\thanks{Mathematics Subject
Classification: Primary 12D05 ; Secondary 12E05}}
\date{\today}
\begin{document}
\maketitle

\begin{abstract}
The relationship between a polynomial's zeros and factors is well known.  If $a\in\r$ is a zero of $f(x)\in \r[x]$ then $x-a$ is a factor of $f(x)$.  In this paper, we generalize this idea to $\r[x,y]$.  We consider the zero sets of two variable polynomials in $\r[x,y]$ and give criterion to when two polynomials with the same zero set in $\r[x,y]$ have a common factor with the same zero set.  When $F$ is not a field, but the division algebra of Quaternions, we provide an example of two polynomials in $F[x,y]$ with the same zero set and no common factor.  
\end{abstract}

\section{Introduction}

Recall that a {zero} of a polynomial $p(x)\in \r[x]$ is a real number $a$ such that $p(a)=0$.  It is a college algebra fact that if a real number $a$ is a zero of a polynomial $p(x)\in \r[x]$ then $x-a$ divides $p(x)$. Many examples demonstrate this idea.  For instance, if $p(x)={x^2}-1$ then $p$ is divisible by $x-1$ and $x+1$ since the zeros of $f$ are $1$ and $ -1$ and we can write the familiar $p(x)=(x-1)(x+1)$.  This fundamental relationship guarantees that any two polynomials of one variable over the real numbers that share a zero set have a common factor with the same zero set. 

\indent Although this relationship is generally attributed only to one variable polynomials, the same can be said for any two polynomials in $n$ variables over the complex numbers, or any algebraically closed field.  As motivation for our work, we will  now establish this result using the cornerstone of algebraic geometry,  the Hilbert Nullstellensatz Theorem.

\begin{thm} \label{thm: algebraically closed}
Let $F$ be an algebraically closed field.  If $p,  g \in F[x_1, x_2, \ldots, x_n]$  have the same zero set, then $p$ and $g$ have a common factor with the same zero set. 
\end{thm}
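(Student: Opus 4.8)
The plan is to translate the hypothesis that $p$ and $g$ share a zero set into a purely algebraic statement about divisibility by invoking the strong form of the Hilbert Nullstellensatz, and then to exploit the fact that $F[x_1, x_2, \ldots, x_n]$ is a unique factorization domain to extract the desired common factor. Write $Z$ for the common zero set, let $V(\cdot)$ denote the vanishing locus of a polynomial or ideal, and let $I(\cdot)$ denote the ideal of all polynomials vanishing on a given set. I would first dispose of the degenerate case: over an infinite field the only polynomial whose zero set is all of $F^n$ is $0$ itself, and every algebraically closed field is infinite, so I may assume $p$ and $g$ are both nonzero.

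First I would apply the Nullstellensatz in the form $I(V(J)) = \sqrt{J}$ to the principal ideals generated by $p$ and by $g$. Since $p$ vanishes on $Z = V(g)$, we get $p \in I(V(g)) = \sqrt{(g)}$, hence $g \mid p^{k}$ for some positive integer $k$. By the symmetric argument, $g$ vanishes on $Z = V(p)$, so $p \mid g^{\ell}$ for some $\ell$. These two divisibilities are the entire content extracted from the geometric hypothesis.

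Next I would pass to irreducible factorizations in the UFD $F[x_1, \ldots, x_n]$, say $p = u\prod_i q_i^{a_i}$ and $g = w\prod_j r_j^{b_j}$ with $u, w$ units and the $q_i$, $r_j$ pairwise non-associate irreducibles. From $g \mid p^{k}$ each $r_j$ divides $p$, and from $p \mid g^{\ell}$ each $q_i$ divides $g$; unique factorization then forces the two families $\{q_i\}$ and $\{r_j\}$ to coincide up to associates. Consequently the squarefree part $h = \prod_i q_i$ divides both $p$ and $g$, so it is a common factor (in the degenerate case it is the empty product $h = 1$).

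It remains to check that $h$ has the same zero set as $p$ and $g$. Since the zero set of a product is the union of the zero sets of its factors, $V(p) = \bigcup_i V(q_i) = V(h)$, and likewise $V(g) = V(h)$, so $V(h) = Z$ and $h$ is the required common factor. This final verification holds over any field; the hypothesis that $F$ is algebraically closed is used exactly once, in the Nullstellensatz step converting vanishing into divisibility. I therefore expect the only real obstacle to be careful bookkeeping: invoking the strong Nullstellensatz in precisely the form $I(V(J)) = \sqrt{J}$, and confirming that passage to the radical preserves each zero set while genuinely yielding a factor of both polynomials.
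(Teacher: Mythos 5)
Your proof is correct and takes essentially the same route as the paper: the strong Nullstellensatz converts the shared zero set into radical membership, and then primality of irreducibles in the UFD $F[x_1, \ldots, x_n]$ yields the squarefree part as a common factor whose zero set is unchanged. The only (harmless) difference is that you apply the Nullstellensatz symmetrically in both directions, concluding that the squarefree parts of $p$ and $g$ agree up to units, whereas the paper gets by with the single divisibility $g \mid p^r$ and takes the squarefree part of $g$ alone; your version is marginally sharper, and your explicit disposal of the zero and constant degenerate cases is a small point of care the paper omits.
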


\begin{proof}

Assume $p$ and $g$  share the same zero set and let $I=<{g}>$.  Since $F$ is algebraically closed, by the Nullstellensatz  there exists a positive integer $r$ such that ${p}^r=hg$ where $h \in  F[x_1, x_2, \ldots, x_n].$  Since $F[x_1, x_2, \ldots, x_n]$is a UFD $g$ is either irreducible or it can be written as a product of irreducible factors.  If $g$ is irreducible then because $F[x_1, x_2, \ldots, x_n]$ is a UFD it is also prime and must therefore divide $p.$  If $g$ is not irreducible then it can be written as the product of irreducible factors $g_1 \cdots g_k$ for some positive integer $k$.  Now since each of these factors is irreducible they must also be prime, so they each divide $p.$  Then the product of the distinct irreducible factors of $g$ divides $p$.  Thus $p$ and $g$ have a common factor.  Now a repeated factor does not add any new zeros to the zero set of $g$ so the common factor will have the same zero set as $p$ and $g.$  It follows that $p$ and $g$ have a common factor with the same zero set.  
\end{proof}

\indent 
The zero sets of one variable polynomials in $\r[x]$ also have a very nice property.  The Fundamental Theorem of Algebra guarantees that not only are they are finite, but their maximum size is determined by the degree of the polynomial. 
Letting $deg_x p$ denote the degree of $x$ of $p(x,y)\in\r[x,y]$ and $deg_y p$ denote the degree of $y$ of $p(x,y)$, the following shows that, in a similar manner, the degree of the variables can, under certain circumstances, determine the type of zero set of a two variable polynomial in $\r[x,y].$

\begin{thm}
Let $p(x, y) \in \r[x, y].$  If $deg_x p$ or $deg_y p$ is odd, then $p(x,y)$ has an uncountabley infinite number of zeros.
\end{thm}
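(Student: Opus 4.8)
The plan is to reduce the two-variable problem to a family of one-variable problems indexed by the value of $y$, and then exploit the classical fact that a real polynomial of odd degree always has a real root. By the symmetry between $x$ and $y$, I may assume without loss of generality that $\deg_x p = n$ is odd; the case where $\deg_y p$ is odd follows by interchanging the roles of the variables.

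First I would regard $p$ as a polynomial in $x$ with coefficients in $\r[y]$, writing
$$p(x,y) = \sum_{i=0}^{n} a_i(y)\, x^i,$$
where $a_n(y) \in \r[y]$ is not the zero polynomial, since this is precisely what $\deg_x p = n$ asserts. Because $a_n$ is a nonzero one-variable polynomial, it has only finitely many real roots; let $S$ denote this finite set. For every $b \in \r \setminus S$ the leading coefficient $a_n(b)$ is nonzero, so the specialized polynomial $p(x,b) \in \r[x]$ has degree exactly $n$, which is odd.

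Next I would apply the standard consequence of the intermediate value theorem: a real polynomial of odd degree takes arbitrarily large positive and arbitrarily large negative values as $x \to \pm\infty$, and being continuous it must vanish somewhere. Hence for each $b \in \r \setminus S$ I can select some $a_b \in \r$ with $p(a_b, b) = 0$, yielding a zero $(a_b, b)$ of $p$.

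Finally, the zeros obtained in this way are pairwise distinct because they have distinct $y$-coordinates, so the assignment $b \mapsto (a_b, b)$ is an injection from $\r \setminus S$ into the zero set of $p$. Since $S$ is finite, the set $\r \setminus S$ is uncountable, and therefore the zero set of $p$ is uncountably infinite. I do not anticipate a serious obstacle; the only points that require a word of care are verifying that deleting the finite root set of $a_n$ still leaves uncountably many admissible values of $b$, and that distinct choices of $b$ cannot produce the same zero — both of which are immediate from the above.
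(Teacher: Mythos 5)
Your proposal is correct and takes essentially the same approach as the paper: specialize $y$ to a fixed real value, use the fact that an odd-degree real polynomial in $x$ has a real root, and note that distinct $y$-values yield distinct zeros. In fact your version is slightly more careful than the paper's, which fixes an \emph{arbitrary} $y_0$ and asserts $p(x,y_0)$ has odd degree $n$, overlooking that the leading coefficient $q_n(y_0)$ may vanish for some $y_0$; your step of discarding the finitely many roots of $a_n(y)$ closes that small gap without changing the argument.
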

 
\begin{proof}
Suppose $deg_x p$ is odd.  Write $p(x,y)$ in descending powers of $x$ as $p(x,y)=q_n(y)x^{n}+q_{n-1}(y)x^{n-1}+\cdots+q_0(y)x^{0}$ where each $q_i\in \r[y].$ Fix  a  real number $y_0$ and consider the polynomial $p(x,y_0).$  It is a polynomial of the variable $x$ and has degree $n$ which we assumed to be odd.  Thus it must have at least one real root.  This is true for each real number $y_0$ so $p(x,y)$ must have an uncountabley infinite number of zeros.  If $deg_y p$ is odd, then write $p(x,y)$ in descending powers of $y$ and fix an $x_0 \in \r$.  Then use the same argument as before.  
\end{proof}

\begin{rem} This shows that if a polynomial in $\r[x,y]$ is to have a finite, discrete zero set then $deg_x p$ and $deg_y p$ must both be even.
\end{rem}  

If $F$ is not algebraically closed the result of Theorem \ref{thm: algebraically closed} breaks down.
Take $F=\r$,  $n=2$, and consider the polynomials $p(x,y)=x^2+y^2$ and $g(x,y)=x^4+y^4$ in $\r[x,y]$.  The zero set of both polynomials is the single point $(0,0)$, yet they do not have a common factor.  
The main result shows that with certain conditions on the zero set 
 satisfied, the result of Theorem \ref{thm: algebraically closed} can be extended to a large class of polynomials in $\r[x,y]$.  In addition, when we replace $\r$ with the division algebra of the Quaternions there are in fact examples of two polynomials sharing an infinite zero set that do not share a factor, one of which we show in Section \ref{section: Quaternions}.

\section{Main result}\label{section: main result}

In the setting of one variable polynomials it is natural use the division algorithm to establish divisibility. Given specific polynomials we can perform long division and look for the remainder to be zero. This idea can be extended to two variable polynomials.
Write $p$ and $g$ in $\r[x,y]$ by descending powers of $x$ so as to view them as polynomials of the one variable $x$ with coefficients in $\r[y]$ and then do long division just as if they were one variable polynomials, the result of which yields an equation similar to the division algorithm.\\

For instance, if $g(x,y)=5x^3-2$ and $p(x,y)=x-3y$ dividing $g$ by $p$ gives\\ \\
\centerline{$\polylongdiv{5x^3-2}{x-3y}$}\\ \\ \\
Thus $g(x,y)=p(x,y)(5x^2+15yx+45y^2)+135{y^3}-2.$\\ \\
If $p(x,y)=2x^4-3x$ and $g(x,y)=yx^2+yx$ then dividing $p$ by $g$ gives\\ \\
\centerline{$\polylongdiv{2x^4-3x}{yx^2+yx}$}\\ \\

So   $2{x^4}-3x=(y{x^2}+yx)(\frac{2x^2}{y}-\frac{2x}{y}+\frac{2}{y})-5x.$\\ \\

 Note that in the first example both the quotient and remainder are polynomials of two variables but in the latter the quotient is a polynomial with coefficients that are rational functions of the variable $y$.  In general, if $p$ and $g$ are polynomials of two variables over the real numbers and $n=deg_x{p}$ then\\ 
 \begin{equation}\label{eqn: dividing1}
g=pq+r
\end{equation}

where $q$ and $r$ are polynomials with coefficients in $\r(y)$ and $deg_x{r}\leq n.$\\ 
 
If $n$= $deg{_x} p$ there are many examples of polynomials in $\r[x,y]$ that have a zero set crossed by an infinite number of horizontal  lines in at least $n$ distinct places.  Any polynomial that has its zero set on the curve $y=x$ or $y=x^n$ naturally satisfies the conditions.  Using this idea of long division we can show that these conditions ensure these polynomials will share the factor $y-x$ or $y-x^n$ respectively with any polynomial that has the same zero set. Furthermore, Theorem \ref{thm: horizontal lines} uses Equation \ref{eqn: dividing1} as a foundation to show that any two polynomials sharing a zero set with these conditions have a common factor with the same zero set.

\begin{thm}\label{thm: horizontal lines}
Let $p(x, y), g(x,y) \in \r[x, y]$ be two polynomials that have the same zero set and 
$n$=    $deg{_x} p$.  
If there are an infinite number of horizontal lines lines that intersect the zero set in at least $n$ distinct places,  then $p$ and $g$ have a common factor with the same zero set.
\end{thm}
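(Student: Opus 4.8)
The plan is to convert Equation \ref{eqn: dividing1} into a genuine divisibility statement inside the UFD $\r[x,y]$ and then run the prime-factorization argument from the proof of Theorem \ref{thm: algebraically closed}. First I would write $g = pq + r$ with $q, r \in \r(y)[x]$ and $\deg_x r < n$ (the remainder in long division by a polynomial of $x$-degree $n$), and then clear denominators: choosing a common denominator $d(y) \in \r[y]$ for the coefficients of $q$ and $r$, I obtain an honest polynomial identity
\begin{equation*}
d(y)\,g(x,y) = p(x,y)\,\tilde q(x,y) + \tilde r(x,y)
\end{equation*}
in $\r[x,y]$, where $\tilde q = dq$ and $\tilde r = dr$ and still $\deg_x \tilde r < n$. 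Working in $\r[x,y]$ rather than $\r(y)[x]$ lets me evaluate freely at real points without worrying about poles.

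The crux is to show $\tilde r \equiv 0$, and this is exactly where the hypothesis on horizontal lines enters. Fix one of the infinitely many lines $y = y_0$ meeting the common zero set in at least $n$ distinct points $(x_1, y_0), \ldots, (x_n, y_0)$. At each such point both $p$ and $g$ vanish, so evaluating the identity gives $\tilde r(x_i, y_0) = 0$ for $i = 1, \ldots, n$. Thus the one-variable polynomial $\tilde r(x, y_0) \in \r[x]$ has at least $n$ distinct roots while $\deg_x \tilde r(x, y_0) \le \deg_x \tilde r < n$, which forces $\tilde r(x, y_0)$ to be the zero polynomial. Hence every coefficient $c_k(y)$ of $\tilde r$, viewed as a polynomial in $x$, satisfies $c_k(y_0) = 0$; since this holds for infinitely many $y_0$, each $c_k \in \r[y]$ has infinitely many roots and therefore vanishes identically. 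So $\tilde r \equiv 0$, giving $d(y)\,g = p\,\tilde q$, and in particular $p \mid d\,g$ in $\r[x,y]$.

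Finally I would extract the common factor using unique factorization, as in Theorem \ref{thm: algebraically closed}. Writing $p$ as a product of its distinct irreducible factors (with multiplicities), I claim that every irreducible factor $\pi$ of $p$ having nonempty real zero set divides $g$. Indeed $\pi$ is prime and $\pi \mid d\,g$, so $\pi \mid d$ or $\pi \mid g$; in the first case $\pi$ divides the univariate polynomial $d(y)$, so up to a constant $\pi$ is an irreducible factor of $d(y)$ in $\r[y]$, and since it has a real zero it must have the form $y - c$. But then its zero set is the whole line $y = c$, which lies in $Z(p) = Z(g)$, so $g$ vanishes identically along $y = c$ and hence $(y - c) \mid g$ after all. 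Letting $h$ be the product of the distinct irreducible factors of $p$ with nonempty zero set, I get $h \mid p$ and $h \mid g$, while $Z(h)$ is the union of the zero sets of these factors, which equals $Z(p)$ because the omitted factors have empty zero set and repeated factors add no new zeros. Thus $h$ is a common factor of $p$ and $g$ with the same zero set.

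I expect the main obstacle to be this last step rather than the division itself: passing from the weaker conclusion $p \mid d\,g$ to an actual common factor requires controlling the extraneous factors introduced by the denominator $d(y)$, and it is precisely the equality $Z(p) = Z(g)$ that rules out a stray linear factor $y - c$ of $p$ failing to divide $g$. I would also check the degenerate case in which $p$ has no irreducible factor with nonempty zero set, but the horizontal-line hypothesis guarantees the common zero set is nonempty, so $h$ is nonconstant and the conclusion is nontrivial.
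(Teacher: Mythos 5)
Your proposal is correct and takes essentially the same route as the paper's proof: divide in $\r(y)[x]$, clear denominators to get $d(y)g = p\tilde q + \tilde r$ in $\r[x,y]$, use the infinitely many horizontal lines to force every coefficient of $\tilde r$ to vanish identically, and then extract the common factor from $p \mid d\,g$ by a prime-factorization argument in the UFD $\r[x,y]$. If anything, your endgame is slightly more careful than the paper's: you explicitly set aside irreducible factors of $p$ with empty real zero set (such as a factor like $y^2+1$ dividing $d$, which need not divide $g$) and explicitly verify that a linear factor $y-c$ dividing $d$ must still divide $g$ because the line $y=c$ lies in $Z(p)=Z(g)$, points the paper treats only implicitly.
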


\begin{proof}  Let $n$=$deg{_x} p$ and assume that there are an infinite number of parallel lines that intersect the zero set of $p$ and $g$ in at least $n$ distinct places.  
Dividing $g(x,y)$ by $p(x,y)$ gives
\begin{equation}\label{eqn: dividing}
g(x,y)=q(x,y)p(x,y)+r(x,y)
\end{equation}
 where $r(x,y)$ and $q(x,y)$ are polynomials of the variable $x$ with coefficients in $
 \r(y)$ and $deg{_x} r \leq {n-1}.$ 
 If we let $h(y)$ be the common denominator of the coefficients in $q$ and $r$ and multiply both sides of (\ref{eqn: dividing}) by h we then have \\ \\
 \centerline{$hg$=${\tilde q}p+{\tilde r}$} \\  \\where $\tilde q=hq$ and $\tilde r=hr$ are polynomials of two variables and since $h\in \r[y]$ $deg_x{\tilde r}\leq {n-1}.$  This means that  ${\tilde r}(x,y)=r_{n-1}(y)x^{n-1}+ r_{n-2}(y)x^{n-2}+\cdots +r_{0}(y)$ where $r_i(y)\in \r[y].$ 
Now if $(x_0, y_0)$ is a point in the zero set of $p$ and $g$ then $g(x_0, y_0)=p(x_0, y_0)=0$.  Thus $0={\tilde q}(x_0,y_0)\cdot0 + {\tilde r}(x_0, y_0)$.  This implies that ${\tilde r}(x_0,y_0)=0.$ 
 
By assumption there are an infinite number of horizontal lines that intersect the zero set in at least $n$ distinct places. Let $y_0$ be the $y$ coordinate of the points on one of these horizontal lines. It follows that there are  at least $n$ distinct $x$ values such that \[ {\tilde r}(x,y_0)=0.\]
But since $deg{_x} {\tilde r} \leq {n-1},$ it follows that \[ r_{n-1}(y_0)= r_{n-2}(y_0)= r_{n-3}(y_0)= \cdots = r_{0}(y_0)=0. \]
Now, there are infinitely many values $y_0$ for which the above relationship holds, and since all $r_i$'s are polynomials of one variable it follows that they are identically equal to zero. Thus $ {\tilde r}(x,y)=0,$ and therefore
\begin{equation}\label{eqn:remainder zero}
hg={\tilde q} p.
\end{equation}
  
This equation tells us that $p$ divides $hg.$  Since $\r[x,y]$ is a UFD either $p$ is irreducible or it can be written as the product of irreducible factors.  We will now show that in either case $p$ and $g$ have a common factor with the same zero set.\\
\indent If $p$ is irreducible it is also prime and must therefore divide either $h$ or $g$.  First suppose it divides $h.$  This means that $h={\tilde h}p$ for some ${\tilde h} \in \r[x,y].$  Since $h$ is a polynomial of the one variable $y$ this implies that $p$ is also a polynomial of the one variable $y.$  But the zero set of a polynomial of one variable is a finite number of horizontal lines in $\r^2$, and it is therefore impossible for there to exist infinitely many horizontal lines that intersect this zero set nontrivially. It follows that $p$ cannot divide $h$ and so must divide $g.$

If $p$ is not irreducible then it can be written as the product of irreducible factors $p_1 \cdots p_k$ for some positive integer $k$ and the zero set of $p$ would be the union of the zero sets of these factors.  Since each of these factors must also be prime, they each either divide $g$ or divide $h.$  Now if they all divide $h$ then the zero set of $p$ would be a finite number of horizontal lines, of which it is impossible for an infinite number of horizontal lines to cross.  Thus one of these factors must divide $g$.  If any of the other irreducible factors of $p$ divide $h$ then the zero set of each factor that is nontrivial is a horizontal line.  
The only way for the zero set of $g$ to contain all the points on a horizontal line is for $g$ to have 
a factor that is only in terms of the variable $y$.  It follows that each of these distinct factors must also divide $g$.  Thus $p$ and $g$ share a common factor with the same zero set.  

 \end{proof}

\noindent Please note that by writing $p(x,y)$ in descending powers of $y$ one can similarly prove

\begin{thm}\label{thm: vertical lines}
Let $p(x, y), g(x,y) \in \r[x, y]$ be two polynomials that have the same zero set and 
$n$=  $deg{_y} p$.  
If there are an infinite number of vertical lines that intersect the zero set in at least $n$ distinct places,  then $p$ and $g$ have a common factor with the same zero set.
\end{thm}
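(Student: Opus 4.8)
The plan is to run the argument of Theorem \ref{thm: horizontal lines} essentially verbatim, with the roles of $x$ and $y$ interchanged. First I would write $p$ and $g$ in descending powers of $y$, regarding them as polynomials in the single variable $y$ with coefficients in the field of rational functions $\r(x)$. Long division then gives $g = qp + r$ where $q$ and $r$ have coefficients in $\r(x)$ and $deg_y r \leq n-1$, with $n = deg_y p$. Letting $h(x) \in \r[x]$ be a common denominator of all coefficients appearing in $q$ and $r$ and clearing denominators yields $hg = \tilde q\, p + \tilde r$, where $\tilde q = hq$ and $\tilde r = hr$ now lie in $\r[x,y]$ and $\tilde r(x,y) = r_{n-1}(x)y^{n-1} + \cdots + r_0(x)$ with each $r_i \in \r[x]$.

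Next I would show $\tilde r \equiv 0$ exactly as in the horizontal case. On the common zero set both $p$ and $g$ vanish, so $\tilde r$ vanishes there as well. A vertical line is the locus $x = x_0$; by hypothesis infinitely many such lines meet the zero set in at least $n$ distinct points, so for each of these values $x_0$ there are at least $n$ distinct $y$-values with $\tilde r(x_0, y) = 0$. Since $deg_y \tilde r \leq n-1$, this forces $r_{n-1}(x_0) = \cdots = r_0(x_0) = 0$. As this holds for infinitely many $x_0$ and each $r_i$ is a one-variable polynomial, every $r_i$ is identically zero, whence $\tilde r \equiv 0$ and $hg = \tilde q\, p$. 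In particular $p \mid hg$ in the UFD $\r[x,y]$.

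Finally I would invoke unique factorization as before, with the crucial observation that the obstructing case now involves polynomials in $x$ alone. If $p$ is irreducible, hence prime, it divides $h$ or $g$; dividing $h(x)$ would force $p$ to be a polynomial in $x$ alone, whose zero set is a finite union of vertical lines that cannot be crossed nontrivially by infinitely many vertical lines --- a contradiction --- so $p \mid g$. If $p$ factors as $p_1 \cdots p_k$ into primes, the same reasoning shows they cannot all divide $h$, and any factor that does divide $h$ has a single vertical line as its zero set; for the zero set of $g$ to contain such a line, $g$ must carry that factor as well, so every distinct irreducible factor of $p$ divides $g$, producing a common factor with the same zero set.

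The bulk of this is a routine transcription of the previous proof, so I expect no genuinely new obstacle. The one point requiring care is the correct dualization: here a vertical line corresponds to fixing $x$, the relevant degree bound is in $y$, and the denominators live in $\r(x)$, so the exceptional divisors that could spoil the zero-set equality are the factors of $p$ depending on $x$ alone rather than on $y$ alone. Keeping this bookkeeping straight --- and confirming that passing to the product of distinct irreducible factors does not enlarge the zero set --- is the only place where a misstep is likely.
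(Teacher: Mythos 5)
Your proposal is correct and is exactly what the paper intends: the paper gives no separate proof of Theorem \ref{thm: vertical lines}, stating only that it follows ``by writing $p(x,y)$ in descending powers of $y$'' from the proof of Theorem \ref{thm: horizontal lines}, and your transcription carries out precisely that symmetry. You have also handled the one delicate point correctly --- the denominators now lie in $\r(x)$, so the obstructing factors are polynomials in $x$ alone, whose zero sets are finitely many vertical lines that cannot be met nontrivially by infinitely many vertical lines.
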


Recall that the degree of a term of a two variable polynomial is the sum of the exponents of the variables in the term and the degree is the maximum of the degrees of the terms which we will denote by $deg [p]$.  Now based on these Theorems we give a slightly more general statement.

\begin{cor}\label{cor:general}
Let $p(x, y), g(x,y) \in \r[x, y]$ be two polynomials that have the same zero set and 
$n$= $deg [p]$.  
If there are an infinite number of parallel lines that intersect the zero set in at least $n$ distinct places,  then $p$ and $g$ have a common factor with the same zero set.
\end{cor}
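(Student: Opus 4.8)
The plan is to reduce the general statement to the already–established horizontal case, Theorem \ref{thm: horizontal lines}, by an invertible linear change of coordinates that rotates the given family of parallel lines into a family of horizontal lines. The reason the corollary is phrased with the total degree $deg[p]$ rather than $deg_x p$ or $deg_y p$ is precisely this: after such a rotation the degree in $x$ of the transformed polynomial is controlled by the total degree, but not by either one-variable degree of the original, so $deg[p]$ is the bound that works for an arbitrary direction.

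Concretely, I would first fix a rotation $T:\r^2\to\r^2$ (an orthogonal linear map, whose matrix has real entries) carrying the common direction of the parallel lines to the horizontal direction, and set $\tilde p = p\circ T^{-1}$ and $\tilde g = g\circ T^{-1}$. Since $T$ is an invertible linear substitution, the assignment $f\mapsto f\circ T^{-1}$ is an $\r$-algebra automorphism of $\r[x,y]$, and I would record three consequences. First, it preserves total degree, because the top-degree homogeneous part of $\tilde p$ is the image under an invertible linear map of that of $p$, so $deg[\tilde p]=deg[p]=n$. Second, it preserves factorization, sending irreducibles to irreducibles and common factors of $p,g$ to common factors of $\tilde p,\tilde g$ and conversely. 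Third, it maps the zero set of $p$ (respectively $g$) bijectively onto that of $\tilde p$ (respectively $\tilde g$), carrying each of the given parallel lines onto a horizontal line and preserving the number of intersection points with the zero set.

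With this setup $\tilde p$ and $\tilde g$ share a zero set, and there are infinitely many horizontal lines meeting it in at least $n$ distinct points. Now comes the one inequality that drives the argument: since the degree in a single variable never exceeds the total degree, $deg_x\tilde p\le deg[\tilde p]=n$. Hence each of these horizontal lines meets the zero set in at least $n\ge deg_x\tilde p$ distinct points, which is exactly the hypothesis of Theorem \ref{thm: horizontal lines} applied to $\tilde p$ and $\tilde g$. That theorem produces a common factor $\tilde d$ of $\tilde p$ and $\tilde g$ with the same zero set, and pulling back by $T$ — that is, setting $d=\tilde d\circ T$ — yields a common factor of $p$ and $g$ whose zero set is $T^{-1}$ of that of $\tilde d$, hence equal to the common zero set of $p$ and $g$.

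I expect the only genuinely delicate point to be the bookkeeping around the change of coordinates rather than any new algebra. One must verify that the rotation keeps the polynomials inside $\r[x,y]$, that it is an automorphism so divisibility and common factors transfer in both directions, and that it is the total degree — not the per-variable degree — that is the invariant surviving the rotation, which is what forces the $deg_x\tilde p\le n$ step to go through even when $deg_x\tilde p$ is strictly smaller than $n$. The special cases in which the parallel lines are already horizontal or vertical require no rotation and follow at once from Theorems \ref{thm: horizontal lines} and \ref{thm: vertical lines}; the rotation is exactly what upgrades those two fixed directions to an arbitrary one.
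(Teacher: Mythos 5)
Your proposal is correct and takes essentially the same route as the paper: an invertible linear change of variables carrying the parallel lines to horizontal lines, an application of Theorem \ref{thm: horizontal lines} to the transformed polynomials, and transport of the resulting common factor back through the (automorphism induced by the) linear map. If anything, your degree bookkeeping is more careful than the paper's, which asserts that $deg_u P(u,v)=deg_v P(u,v)=n$ — false in general, for instance when the top homogeneous form of $p$ vanishes on the direction of the lines — whereas your observation that only the inequality $deg_x \tilde p \le deg[\tilde p] = n$ is needed (so the lines still meet the zero set in at least $deg_x \tilde p$ points) is exactly the right fix.
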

\begin{proof} Let $\frac{a}{b}$ be the slope of the parallel(we may assume they are not vertical or horizontal) lines that intersect the zero set of the two polynomial lines in at least $n$ distinct places. Consider the change of variables

\[  u=bx+ay \] and \[ v=ax-by .\]
This linear transformation is invertible and the two polynomials in the new variables $u,v$ will share the same zero set. Let us denote these polynomials by $P(u,v)$ and $G(u,v).$
It is clear that $ deg{_u}P(u,v)=deg{_v}P(u,v)=n.$  The parallel lines that intersect the zero set of $p(x,y)$ and $g(x,y)$ are mapped into horizontal lines in the $u,v$ plane. We are now under the conditions of Theorem  \ref{thm: horizontal lines}. It follows that $P(u,v)$ divides $G(u,v)$ and applying the inverse linear transformation we conclude that $p$ and $g$ have a common factor with the same zero set.

\end{proof}
Note that no polynomial with a finite zero set will fall under the above theorems.  This may in part explain why the polynomials $p(x,y)=x^2+y^2$ and $g(x,y)=x^4+y^4$ in $\r[x,y]$ do not have a common factor.  There are also polynomials with infinite zero sets that are left out.  One example is any polynomial with a zero set on the curve ${x^4}+{y^4}=1.$  In this case, $n=4$ but one can show algebraically that this curve will be crossed at no more than two distinct places by any line.  

\section{On the non-commutative case}\label{section: Quaternions}
Let us consider the case of polynomials over the division algebra of quaternions $\mathbb{H}.$ If we look at   \[ p(x,y)=xy-yx, \]
then the zero set is the set of all commuting pairs of quaternions in $\mathbb{H}^2.$ According to \cite{MC}, the polynomial \[ g(x,y)=x^2y+y^2x - 2xyx,\]
has the same exact zero set as $p(x,y).$  The following theorem shows that $p$ and $g$ do not have a common factor with the same zero set.

\begin{thm}
Let $p(x,y)=xy-yx$ and $g(x,y)=x^2y+y^2x - 2xyx.$  Then $p$ and $g$ do not have a common factor with the same zero set. 
\end{thm}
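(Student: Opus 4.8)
The plan is to realize both polynomials inside the free associative algebra $\mathbb{H}\langle x,y\rangle$, in which the quaternion scalars commute with the indeterminates $x,y$ while $x$ and $y$ do not commute with each other. This ring is connected graded, with homogeneous components spanned over $\mathbb{H}$ by the words in $x,y$; because $\mathbb{H}$ is a division ring, distinct words are $\mathbb{H}$-independent, the ring has no zero divisors, and its units are exactly the nonzero scalars $\mathbb{H}^{\times}$. In such a graded domain a homogeneous element factors only as a product of homogeneous elements. Since $p=xy-yx$ and $g=x^{2}y+y^{2}x-2xyx$ are homogeneous of degrees $2$ and $3$, I would reduce the whole question to two finite coefficient computations: that $p$ is irreducible, and that $p$ does not divide $g$ on either side. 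Factoring over all of $\mathbb{H}$ rather than merely $\mathbb{R}$ only strengthens the non-factorization conclusion, and it is the zero set in $\mathbb{H}^{2}$ that the theorem concerns.

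First I would prove that $p$ is an atom. Any nontrivial factorization of the degree-$2$ element $p$ must, by homogeneity, take the form $p=(\alpha x+\beta y)(\gamma x+\delta y)$ with $\alpha,\beta,\gamma,\delta\in\mathbb{H}$. Expanding and comparing coefficients of the basis words $x^{2},xy,yx,y^{2}$ yields the system $\alpha\gamma=0,\ \alpha\delta=1,\ \beta\gamma=-1,\ \beta\delta=0$; since $\mathbb{H}$ is a division ring, $\alpha\delta=1$ forces $\alpha\neq0$, hence $\gamma=0$, which contradicts $\beta\gamma=-1$. Thus $p$ has no degree-$1$ factor and is irreducible, so its only divisors are units and scalar associates of $p$.

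Next I would show $p\nmid g$. As $g$ is homogeneous of degree $3$, a factorization through $p$ would have a degree-$1$ cofactor $q=\alpha x+\beta y$ with $\alpha,\beta\in\mathbb{H}$. Expanding $pq$ in the word basis produces only the words $xyx,\,xy^{2},\,yx^{2},\,yxy$, none of which is $x^{2}y$, whereas $g$ contains $x^{2}y$ with coefficient $1$; so $g=pq$ is impossible. Expanding $qp$ gives $\alpha x^{2}y-\alpha xyx+\beta yxy-\beta y^{2}x$, and matching with $g$ forces simultaneously $\alpha=1$ (from $x^{2}y$) and $\alpha=2$ (from $xyx$), which cannot hold in $\mathbb{H}$; so $g=qp$ is impossible as well. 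Hence $p$ divides $g$ neither on the left nor on the right.

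Finally I would assemble the conclusion. Because $p$ is irreducible, a non-unit common divisor of $p$ and $g$ must be an associate of $p$, so that $p$ itself would divide $g$ on one side — which the previous step excludes. Thus $p$ and $g$ share no non-unit common factor, and the only remaining common divisors, the nonzero scalars, vanish nowhere and so cannot have the infinite zero set of $p$. Therefore $p$ and $g$ have no common factor with the same zero set. I expect the principal obstacle to be conceptual rather than computational: pinning down the correct factorization framework over the non-commutative $\mathbb{H}$, namely verifying that $\mathbb{H}\langle x,y\rangle$ is a graded domain with units $\mathbb{H}^{\times}$, that homogeneous elements split only into homogeneous factors, and, crucially, that admitting genuinely quaternionic (not merely real) coefficients in a putative cofactor introduces no new factorizations. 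The coefficient comparisons settle this last point, since the scalar equations they generate are already contradictory within $\mathbb{H}$; the only extra care required is to treat left- and right-divisibility separately, as the ring is non-commutative.
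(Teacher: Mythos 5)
Your proposal is correct, and its computational core is the same as the paper's: expand the putative linear factorizations of $p$ and the putative factorizations $g=pq$, $g=qp$ in the word basis and derive contradictions among the scalar coefficients. What you add, and what is genuinely valuable, is the structural scaffolding the paper omits: the paper simply \emph{supposes} that the factors of $p$ and the cofactor $h$ of $g$ are linear (with constant terms) and never says in which ring the factorization is happening, whereas you work explicitly in the free algebra $\mathbb{H}\langle x,y\rangle$, note it is a graded domain with units $\mathbb{H}^{\times}$, and use the standard fact that in a graded domain a homogeneous element factors only into homogeneous pieces --- this both pins the degrees of all putative factors and kills the constant terms, shortening the computations; your closing step (irreducibility forces any non-unit common divisor to be an associate of $p$, and units vanish nowhere so cannot share the infinite zero set) also makes explicit an assembly the paper leaves implicit. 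More importantly, your treatment of the case $g=qp$ repairs an actual error in the paper: the paper claims that ``a similar argument shows that there cannot be a $y^2x$ term,'' but $(ax+by+c)(xy-yx)$ does contain the term $-b\,y^{2}x$, so the paper's stated contradiction fails; the correct contradiction is exactly the one you found, namely that matching $x^{2}y$ forces the coefficient of $x$ in the cofactor to be $1$ while matching $xyx$ forces it to be $2$. So your argument is not merely a rewrite but a corrected and more rigorous version of the paper's proof.
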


\begin{proof}
We will first show that $p$ is irreducible over $\mathbb{H}$ and then show that $p$ does not divide $g$ thus implying that $p$ and $g$ do not share a common factor.  

Suppose $p$ is the product of two monomials.  Then $xy-yx=(ax+by+c)(Ax+By+C)$ where $a, b, c, A, B, C, \in \mathbb{H}.$ Multiplying the right side out gives \\

\centerline{$xy-yx=aAx^2+aBxy+aCx+bAyx+bBy^2+bCy+cAx+cBy+cC.$}
\bigskip
Then $aA=0$, $aB=1$, and $bA= -1$.  If $aA=0$, then since $\mathbb{H}$ has no zero divisors either $a=0$ or $A=0.$   However this contradicts that fact that $aB=1$ and $bA= -1$.  It follows that $p$ cannot be the product of two monomials and is therefore irreducible.  

Now suppose that $p$ divides $g$.  Then either $g(x,y)=p(x,y)h(x,y)$ or $g(x,y)=h(x,y)f(x,y)$ for some $h(x,y) \in\mathbb{H}[x.y].$  Since $h(x,y)$ cannot be a constant suppose that it is linear, that $h(x,y)=ax+by+c$.  If $g=ph$ then $x^2y+y^2x - 2xyx=(xy-yx)(ax+by+c)$.  Multiplying the right side out gives\\

\centerline{$x^2y+y^2x - 2xyx=axyx+bxy^2+cxy-ayx^2-byxy-cyx.$}
\bigskip
Since there is no $x^2y$ term it follows that $g\neq ph.$  If $g= hp$ then a similar argument shows that there cannot be a $y^2x$ term.  It follows that $p$ does not divide $g$ and that they have no common factor with the same zero set.

\end{proof}

\noindent  Department of Mathematics, Davinci Academy, Ogden, UT 84401,\\ USA,  
\  \textbf{e-mail:} \ {\tt micki.balaich@davinciacademy.org}  
\medskip
 
\noindent Department of Mathematics,
 Weber State University, Ogden, UT  84408, USA,   
\  \textbf{e-mail:} \ {\tt mihailcocos@weber.edu}

\end{document}